\documentclass[a4paper,leqno]{amsart}

\usepackage{latexsym}
\usepackage[english]{babel}
\usepackage{fancyhdr}
\usepackage[mathscr]{eucal}
\usepackage{amsmath}
\usepackage{mathrsfs}
\usepackage{amsthm}
\usepackage{amsfonts}
\usepackage{amssymb}
\usepackage{amscd}
\usepackage{bbm}
\usepackage{graphicx}
\usepackage{graphics}
\usepackage{latexsym}
\usepackage{color}

\newcommand{\ud}{\mathrm{d}}

\newcommand{\re}{\mathfrak{Re}}

\newcommand{\ii}{\mathrm{i}}
\newcommand{\cH}{\mathcal{H}}

\newcommand{\C}{\mathbb C}

\newcommand{\R}{\mathbb R}

\DeclareMathOperator{\sinc}{sinc}

\theoremstyle{plain}
\newtheorem{theorem}{Theorem}[section]
\newtheorem{lemma}[theorem]{Lemma}

\theoremstyle{definition}
\newtheorem{definition}[theorem]{Definition}

\newtheorem*{remark*}{Remark}

\numberwithin{equation}{section}

\begin{document}

\title[On real resonances for 3D point interactions]{On real resonances for three-dimensional Schr\"odinger operators with point interactions}
\author[A.~Michelangeli]{Alessandro Michelangeli}
\address[A.~Michelangeli]{Institute for Applied Mathematics, University of Bonn \\ Endenicher Allee 60 \\ 
D-53115 Bonn (Germany).}
\email{michelangeli@iam.uni-bonn.de}
\author[R.~Scandone]{Raffaele Scandone}
\address[R.~Scandone]{Gran Sasso Science Institute -- GSSI \\ viale Francesco Crispi 7 \\ 67100 L'Aquila (Italy).}
\email{raffaele.scandone@gssi.it}

\begin{abstract}
We prove the absence of positive real resonances for Schr\"odinger operators with finitely many point interactions in $\R^3$ and we discuss such a property from the perspective of dispersive and scattering features of the associated Schr\"odinger propagator.
\end{abstract}

\date{\today}

\subjclass[2000]{}
\keywords{Point interactions. Singular perturbations of the Laplacian. Positive resonances. Limiting Absorption Principle. Definite positive functions.}

\maketitle

\section{Introduction and background: dispersive properties of the point-singular perturbed Schr\"{o}dinger equation}\label{sec:intro-background}

A typical obstacle to the dispersive and scattering properties of the time evolution group associated with the Schr\"{o}dinger equation
\begin{equation}\label{eq:SEq}
 \ii\partial_t u\;=\;-\Delta u + V u
\end{equation}
in the unknown $u\equiv u(t,x)$, where $t\in\mathbb{R}$, $x\in\mathbb{R}^d$ ($d\in\mathbb{N}$), and $V:\mathbb{R}^d\to\mathbb{R}$ is a given measurable potential, is the existence of non-trivial solutions to 
\begin{equation}\label{eq:EVeqn}
 -\Delta u + V u\;=\;\mu u
\end{equation}
for some $\mu\in\mathbb{R}$.

In those cases, relevant in a variety of contexts, where $V$ is sufficiently localised (`with short range') and/or is a suitably small perturbation of the Laplacian, the existence of non-trivial $L^2(\mathbb{R}^d)$- solutions to \eqref{eq:EVeqn} are interpreted as bound states of the associated Schr\"{o}dinger operator, and if $\mu> 0$ one refers to it as an eigenvalue embedded in the continuum. Solutions to \eqref{eq:EVeqn} in weaker $L^2$-weighted spaces are generally known instead as \emph{resonances} (a notion that we shall explicitly define in due time for the purposes of the present analysis), and they too affect the dispersive and scattering behaviour of the propagator defined by \eqref{eq:SEq}.

When $V\in L^2_{\mathrm{loc}}(\mathbb{R}^d)$, and in \eqref{eq:EVeqn} $u\in H^2_{\mathrm{loc}}(\mathbb{R}^d)$, it was first proved by Kato \cite{Kato_growth_properties-1959} that positive eigenvalues are absent, and by Agmon  \cite{Agmon-annSNS-1975} and by Alshom and Smith \cite{Alsholm-Schmidt-1971} that positive resonances are absent too. For rougher (non-$L^2_{\mathrm{loc}}$) potentials, positive eigenvalues were excluded by Ionescu and Jerison \cite{Ionescu-Jerison-2003} and by Koch and Tataru \cite{Koch-Tataru-2005} by means of suitable Carleman-type estimates which imply, owing to a unique continuation principle \cite{Jerison-Kenig-1985,Koch-Tataru-2001}, that the corresponding eigenfunctions must be compactly supported and hence vanish. Absence of positive resonances whose associated resonant state $u$ (solution to \eqref{eq:EVeqn}) satisfies appropriate radiation conditions at infinity, was proved by Georgiev and Visciglia \cite{Georgiev-Visciglia_resonances-2007} for $L^{d/2}_{\mathrm{loc}}$-potentials decaying as $|x|^{-(1+\varepsilon)}$ or faster.

A closely related and equally challenging context, which this work is part of, is the counterpart problem of existence or non-existence of spectral obstructions (eigenvalues or resonances) when the potential $V$ in \eqref{eq:SEq}-\eqref{eq:EVeqn} is formally replaced by a finite number of delta-like bumps localised at certain given points in space. This corresponds to a well-established rigorous construction of point-like perturbations of the free Laplacian usually referred to as `Schr\"{o}dinger operator with point interaction' (we refer to the monograph \cite{albeverio-solvable} as the standard references in this field).

We recall that among the various equivalent, yet conceptually alternative ways of defining on $L^2(\mathbb{R}^d)$ the formal operator
\begin{equation}\label{eq:formalHamilt}
^\textrm{``}-\Delta+\sum_{j=1}^N\,\nu_j\,\delta(x-y_j)^\textrm{''}
\end{equation}
obtained by adding to the free Laplacian $N$ singular perturbations centred at the points $y_1,\dots,y_N\in\mathbb{R}^d$ and of magnitude, respectively, $\nu_1,\dots,\nu_N\in\mathbb{R}$, one is to obtain \eqref{eq:formalHamilt} as the limit of Schr\"{o}dinger operators with actual potentials $V^{(j)}_\varepsilon(x-y_j)$ each of which, as $\varepsilon\downarrow 0$, spikes up to a delta-like profile, the support shrinking to the point $\{y_j\}$, and another way is to define \eqref{eq:formalHamilt} as a self-adjoint extension of the restriction of $-\Delta$ to smooth functions supported away from the $y_j$'s. Either approaches reproduce the free Laplacian unless when $d=1,2,3$, in which case one obtains a non-trivial perturbation of $-\Delta$.

In this work we shall focus on $d=3$ spatial dimensions and therefore fix a collection
\begin{equation*}
Y\;:=\;\{y_1, \dots, y_N\}\;\subset\;\mathbb{R}^3
\end{equation*}
of $N$ distinct points where the perturbation is supported at. It turns out \cite[Section II.1.1]{albeverio-solvable}  that the operator
\begin{equation*}
-\Delta\upharpoonright C_0^\infty(\mathbb{R}^3 \!\setminus\! Y)
\end{equation*}
is densely defined, real symmetric, and non-negative on $L^2(\mathbb{R}^d)$, and admits a $N^2$-real-parameter family of self-adjoint extensions, each of which acts as the free negative Laplacian on functions that are supported away from the interaction centres. The most relevant extensions constitute the $N$-parameter sub-family
\[
\{-\Delta_{\alpha,Y}\,|\,
\alpha\equiv(\alpha_1,\dots,\alpha_N)\in(-\infty,\infty]^N\}
\]
of so-called `local' extensions, that is, the rigorous version of \eqref{eq:formalHamilt}: for them the functions $u$ in the domain of self-adjointness are only qualified by certain local boundary conditions at each singularity centre, with no pairing between distinct centres, which take the explicit form
\begin{equation*}
\lim_{r_j\downarrow 0} 
\Big(\frac{\partial(r_j u)}{\partial r_j}- 4\pi \alpha_j r_j u\Big) =0\,,\quad r_j:=|x-y_j|\,, \quad j\in\{1, \dots, N\}\,. 
\end{equation*}

Physically, each $\alpha_j$ is proportional to the inverse scattering length of the interaction supported at $y_j$. In particular, if for some $j\in\{1,\ldots ,N\}$ one has $\alpha_j=\infty$, then no actual interaction is present at the point $y_j$, and in practice things are as if one discards it. When $\alpha=\infty$, one recovers the the Friedrichs extension of $-\Delta\upharpoonright C_0^\infty(\mathbb{R}^3 \!\setminus\! Y)$, namely the self-adjoint negative Laplacian with domain $H^2(\R^3)$. We may henceforth assume, without loss of generality, that $\alpha$ runs over $\R^N$.

The perturbations $-\Delta_{\alpha,Y}$ of $-\Delta$ have a long history of investigation and in Section \ref{sec:setup-results} we shall list a number of properties that rigorously qualify them  and are relevant for our subsequent analysis.
In the mathematical literature they were introduced and characterised for the case $N=1$ by Berezin and Faddeev \cite{Berezin-Faddeev-1961}, Albeverio, H\o{}egh-Krohn, and Streit \cite{Albeverio-HK-Streit-1977}, Nelson \cite{Nelson-1977}, Albeverio, Fenstad, and H\o{}egh-Krohn \cite{Albeverio-Fenstad-HoeghKrohn-1979_singPert_NonstAnal}, and Albeverio and H\o{}egh-Krohn \cite{AHK-1981-JOPTH}.
For generic $N\geqslant 1$ centres, $-\Delta_{\alpha,Y}$ was rigorously studied first by Albeverio, Fenstad, and H\o{}egh-Krohn \cite{Albeverio-Fenstad-HoeghKrohn-1979_singPert_NonstAnal}, and subsequently characterised by Zorbas \cite{Zorbas-1980}, 
Grossmann, H\o{}egh-Krohn, and Mebkhout 
\cite{Grossmann-HK-Mebkhout-1980,Grossmann-HK-Mebkhout-1980_CMPperiodic}, D{\setbox0=\hbox{a}{\ooalign{\hidewidth\lower1.5ex\hbox{`}\hidewidth\crcr\unhbox0}}}browski and Grosse \cite{Dabrowski-Grosse-1985}, and more recently by Arlinski\u{\i} and Tsekanovski\u{\i} \cite{arlinski-tsekanoviski-2005}, and by Goloshchapova, Malamud, and Zastavnyi \cite{Goloshchapova-Zastavnyi-Malamud-2010,Goloshchapova-Zastavnyi-Malamud-2011}.

The analysis of the dispersive and scattering properties of the Schr\"{o}dinger propagator $e^{\ii t\Delta_{\alpha,Y}}$, $t\in\mathbb{R}$, has been an active subject as well. A class of $L^p\to L^q$ dispersive estimates were established by D'Ancona, Pierfelice, and Teta \cite{DAncona-Pierfelice-Teta-2006} (in weighted form), and by Iandoli and Scandone \cite{Iandoli-Scandone-2017} (removing the weights used in \cite{DAncona-Pierfelice-Teta-2006} in the largest regime possible of the $(p,q)$-indices). The $L^p$-boundedness of the wave operators for the pair $(-\Delta_{\alpha,Y},-\Delta)$ in the regime $p\in(1,3)$ (from which dispersive and Strichartz estimates can be derived by intertwining $-\Delta$ and $-\Delta_{\alpha,Y}$), as well as the $L^p$-unboundedness of the wave operators when $p=1$ or $p\in[3,+\infty]$, was proved by Dell'Antonio, Michelangeli, Scandone, and Yajima \cite{DMSY-2017} (with counterpart results by Duch\^{e}ne, Marzuola, and Weinstein \cite{Duchene-Marzuola-Weinstein-2010} in $d=1$ and Cornean, Michelangeli, and Yajima \cite{CMY-2018-2Dwaveop} in $d=2$ dimensions).

In analogy with the ordinary Schr\"{o}dinger equation \eqref{eq:SEq}, also the dispersive features of the singular point-perturbed Schr\"{o}dinger equation
\begin{equation}\label{eq:SEq-sing}
 \ii\partial_t u\;=\;-\Delta_{\alpha,Y} \,u
\end{equation}
strictly depend on the possible presence of eigenvalues or resonances for $-\Delta_{\alpha,Y}$, and indeed in the above-mentioned works \cite{DAncona-Pierfelice-Teta-2006,Iandoli-Scandone-2017,DMSY-2017} special assumptions on the choice of $\alpha$ and $Y$ are often made so as to ensure that no spectral obstruction occurs.

In fact (see Theorem \ref{thm:general_properties} below for the complete summary and references), the spectrum $\sigma(-\Delta_{\alpha,Y})$ only consists of an absolutely continuous component $[0,+\infty)$ which is also the whole essential spectrum, plus possibly a number of non-positive eigenvalues. Thus, as usual, for the purposes of the dispersive analysis, one considers $P_{\mathrm{ac}}\,e^{\ii t\Delta_{\alpha,Y}}$, namely the action of the singular Schr\"{o}dinger propagator on the sole absolutely continuous subspace of $L^2(\mathbb{R}^3)$, and additionally one has to decide whether possible resonances are present.

When $N=1$ the picture is completely controlled: $-\Delta_{\alpha,Y}$ has only one negative eigenvalue if $\alpha<0$, and has only a resonance, at zero, if $\alpha=0$; correspondingly the integral kernel of the propagator $e^{\ii t\Delta_{\alpha,Y}}$ is explicitly known, as found by Scarlatti and Teta \cite{Scarlatti-Teta-1990} and Albeverio, Brze\'{z}niak, and D{\setbox0=\hbox{a}{\ooalign{\hidewidth\lower1.5ex\hbox{`}\hidewidth\crcr\unhbox0}}}browski \cite{Albeverio_Brzesniak-Dabrowski-1995}, from which $L^p\to L^q$ dispersive estimates are derived directly, as found in \cite{DAncona-Pierfelice-Teta-2006}. In certain regimes of $p,q$ slower decay estimates do emerge in the resonant case $\alpha=0$, as opposed to the non-resonant one.

For  generic $N$ perturbation centres, it is again well understood (see Theorem \ref{thm:general_properties} below) that at most $N$ non-positive eigenvalue can add up to the absolutely continuous spectrum $[0,+\infty)$ of $-\Delta_{\alpha,Y}$. In particular, as discussed by one of us in \cite[Sect.~3]{Scandone_ResExp_2019}, a zero-energy eigenvalue may occur (see also \cite[page 485]{albeverio-solvable}).

The study of resonances for generic $N$ has been attracting a considerable amount of attention. As explained in Section \ref{sec:setup-results}, it is known since the already mentioned work \cite{Grossmann-HK-Mebkhout-1980_CMPperiodic} by Grossmann, H\o{}egh-Krohn, and Mebkhout (see also \cite[Sect.~II.1.1]{albeverio-solvable}), that resonances and eigenvalues $z^2$ of $-\Delta_{\alpha,Y}$ are detected, on an equal footing, by the singularity of an auxiliary $N\times N$ square matrix $\Gamma_{\alpha,Y}(z)$ depending on $z\in\mathbb{C}$. \emph{Real negative resonances} (thus $z=\ii\lambda$ with $\lambda>0$) are excluded by the arguments of \cite{Grossmann-HK-Mebkhout-1980_CMPperiodic}. A \emph{zero resonance} may occur, and one of us \cite{Scandone_ResExp_2019} qualified this possibility in terms of a convenient low-energy resolvent expansion which is at the basis of our definition \ref{def:resonances} below. \emph{Complex resonances} ($\mathfrak{Im}z<0)$ have been investigated by Albeverio and Karabash \cite{Albeverio-Karabash-2017,Albeverio-Karabash-2018-multilevelReson,Albeverio-Karabash-2018-GenAsymptReson} and Lipovsk\'{y} and Lotoreichik \cite{Lipovsky-Lotoreichik-2017}, using techniques on the localisation of zeroes of exponential polynomials, and turn out to lie mostly within certain logarithmic strips in the complex $z$-plane. \emph{Real positive resonances} (thus, $z\in\mathbb{R}\setminus\{0\}$) have been 
recently excluded by Galtbayar and Yajima \cite{Galtbayar-Yajima-2019}, and implicitly also by Goloshchapova, Malamud, and Zastavnyi \cite{Goloshchapova-Zastavnyi-Malamud-2010,Goloshchapova-Zastavnyi-Malamud-2011}.

In this work we supplement this picture by demonstrating the \emph{absence of positive resonances} for $-\Delta_{\alpha,Y}$ with an argument that has 
the two-fold virtue of being particularly compact as compared to the general setting of \cite{Goloshchapova-Zastavnyi-Malamud-2010,Goloshchapova-Zastavnyi-Malamud-2011}, and exploiting the explicit structure of the matrix $\Gamma_{\alpha,Y}(z)$, unlike the abstract reasoning of \cite{Galtbayar-Yajima-2019} (further comments in this respect are cast at the end of Section \ref{sec:proofs}): as such, the approach that we present here has its own autonomous interest.

Moreover, we have already mentioned that the absence of positive resonances for an ordinary Schr\"{o}dinger operator $-\Delta+V$ is typically proved with Carleman's estimate, whereas for the singular version $-\Delta_{\alpha,Y}$ it appears to be very hard to use those classical techniques -- and indeed our proof relies on a direct analysis based on the explicit formula for the resolvent: this makes any proof of absence of resonances surely valuable.

In Section \ref{sec:setup-results} we present the rigorous context within which our main result is formulated. In particular, we survey the definition and the basic properties of the singular point-perturbed Schr\"{o}dinger operator $-\Delta_{\alpha,Y}$ and we formulate the precise definition of resonance.

The proof of our main theorem is then discussed in Section \ref{sec:proofs}, together with a few additional comments for comparison with the previous literature.

We conclude our presentation in Section \ref{sec:remarks_open_problems} with some final remarks that connect our main theorem with recent dispersive and scattering results for the Schr\"{o}dinger evolution of the singular point-perturbed Laplacian, and highlight interesting open questions.

\medskip

\textbf{Notation.} For vectors in $x,y\in\mathbb{R}^d$ the Euclidean norm and scalar products shall be denoted, respectively, by $|x|$ and $x\cdot y$, whereas for the action of an operator (or a matrix, in particular) $A$ on the vector $v$ we shall simply write $Av$. The expression $\delta_{j,k}$ denotes the Kronecker delta. By $\mathbbm{1}$ and $\mathbb{O}$ we shall denote, respectively, the identity and the zero operator, irrespectively of which vector space they act on, which will be clear from the context. By $\overline{z}$ and $Z^*$ we shall denote, respectively, the complex conjugate of a scalar $z\in\mathbb{Z}$ and the transpose conjugate of a square matrix $Z$ with complex entries. We shall use the shortcut $\langle x\rangle:=\sqrt{1+x^2}$ for $x\in\mathbb{R}$. By $\mathcal{B}(X,Y)$ we shall denote the space of bounded linear operators from the Banach space $X$ to the Banach space $Y$. For a vector $\psi$ in a Hilbert space $\cH$ the $\cH\to\cH$ rank-one orthogonal projection onto the span of $\psi$ shall be indicated with $|\psi\rangle\langle\psi|$. The rest of the notation is standard or will be declared in due time.

\section{Set-up and main result}\label{sec:setup-results}

Let us start by collecting an amount of well known facts concerning the three-dimensional singular point-perturbed Schr\"{o}dinger operator that we informally referred to in the course of the previous Section.

Let us fix $N\in\mathbb{N}$, a collection $Y=\{y_1,\dots,y_N\}$ of distinct points in $\mathbb{R}^3$, and a multi-index parameter $\alpha\equiv(\alpha_1,\dots,\alpha_N)\in\mathbb{R}^N$.

For $z\in \mathbb{C}$ and $x,y,y'\in \mathbb{R}^3$, let us set 
\begin{equation}\label{eq:def_of_the_Gs}
\mathcal{G}_z^y(x)\;:=\;\frac{e^{\ii z|x-y|}}{\,4\pi |x-y|\,},\qquad
\mathcal{G}_z^{yy'}\;:=\;\begin{cases}
\displaystyle\frac{e^{\ii z|y-y'|}}{\,4\pi |y-y'|\,} & \textrm{if }\;y'\neq y \\ 
\qquad 0 & \textrm{if }\;y'= y\,, \\
\end{cases}
\end{equation}
and 
\begin{equation}\label{ga-def}
\Gamma_{\alpha,Y}(z)\;:=\;\Big(\Big(\alpha_j-\frac{\ii z}{\,4\pi\,}\Big)\delta_{j,k}-\mathcal{G}_z^{y_jy_k}\Big)_{\!j,k=1,\dots,N}\,.
\end{equation}
Clearly, the map $z\mapsto \Gamma_{\alpha,Y}(z)$ has values in the space of $N\times N$ symmetric, complex valued matrices, and is entire. Therefore, $z\mapsto \Gamma_{\alpha,Y}(z)^{-1}$ is meromorphic on $\mathbb{C}$ and hence the subset $\mathcal{E}_{\alpha,Y}\subset\mathbb{C}$ of poles of $\Gamma_{\alpha,Y}(z)^{-1}$ is discrete. Let us further define
\begin{equation}\label{eq:EpmE0}
 \begin{split}
  \mathcal{E}_{\alpha,Y}^\pm\;&:=\;\mathcal{E}_{\alpha,Y}\cap\mathbb{C}^\pm \\
   \mathcal{E}_{\alpha,Y}^0\;&:=\;\mathcal{E}_{\alpha,Y}\cap\mathbb{R}\,, 
 \end{split}
\end{equation}
where $\mathbb{C}^+$ (resp., $\mathbb{C}^-$) denotes as usual the open complex upper (resp., lower) half-plane. 


\begin{definition}\label{def:DaY}
 Let $z\in\mathbb{C}^+\setminus\mathcal{E}_{\alpha,Y}^+$. The operator $-\Delta_{\alpha,Y}$ is defined on the domain
 \begin{equation}\label{eq:domain_of_HaY}
  \mathcal{D}(-\Delta_{\alpha,Y})\;:=\;\left\{
  u\in\,L^2(\R^3)\left|\!
  \begin{array}{c}
   u\,=\,F_z + \displaystyle\sum_{j,k=1}^N (\Gamma_{\alpha,Y}(z)^{-1})_{jk} \, F_z(y_k) {\mathcal{G}}_{z}^{y_j}  \\
   \textrm{for some }F_z \in H^2(\mathbb{R}^3)
  \end{array}
  \!\!\!\right.\right\}
 \end{equation}
 by the action
 \begin{equation}\label{eq:action_of_H}
 (-\Delta_{\alpha,Y}-z^2{\mathbbm{1}})\,u\;=\;(-\Delta -z^2{\mathbbm{1}})\,F_z\,.
\end{equation}
\end{definition}

It is straightforward to check that at fixed $z$ the decomposition \eqref{eq:domain_of_HaY} of a generic element in $\mathcal{D}(-\Delta_{\alpha,Y})$ is unique, and that the space $ \mathcal{D}(-\Delta_{\alpha,Y})$, as well as the action of $-\Delta_{\alpha,Y}$ on a generic function of its domain, are actually independent of the choice of $z$. Moreover, on $H^2$-functions $F$ vanishing at all points of $Y$ one has $-\Delta_{\alpha,Y}F=-\Delta F$.

\begin{theorem}[Basic properties of the point-perturbed Schr\"{o}dinger operator]\label{thm:general_properties}~
\begin{itemize}
 \item[(i)] The operator $-\Delta_{\alpha,Y}$ is self-adjoint on $L^2(\mathbb{R}^3)$ and extends the operator $-\Delta\upharpoonright C_0^\infty(\mathbb{R}^3 \!\setminus\! Y)$. The Friedrichs extension of the latter, namely $-\Delta$ with domain $H^2(\mathbb{R}^3)$, corresponds to the formal choice $\alpha=\infty$ in Definition \ref{def:DaY}.
 \item[(ii)] If $u\in\mathcal{D}(-\Delta_{\alpha,Y})$ and $u|_{\mathcal{U}}=0$ for some open subset $\mathcal{U}\subset\mathbb{R}^3$, then $(-\Delta_{\alpha,Y}u)|_{\mathcal{U}}=0$.
 \item[(iii)] The set $\mathcal{E}_{\alpha,Y}^+$ of poles of $\Gamma_{\alpha,Y}(z)^{-1}$ in the open complex half-plane consists of at most $N$ points that are all located along the positive imaginary semi-axis, and for $z\in\mathbb{C}^+\setminus\mathcal{E}^+$ one has the resolvent identity
\begin{equation}\label{eq:resolvent_identity}
(-\Delta_{\alpha,Y} -z^2{\mathbbm{1}})^{-1} -(-\Delta-z^2{\mathbbm{1}})^{-1} \;=\; 
\sum_{j,k=1}^N  (\Gamma_{\alpha,Y}(z)^{-1})_{jk} \,|\mathcal{G}_{z}^{y_j}\rangle\langle
\overline{\mathcal{G}_{z}^{y_k}}|\,.
\end{equation}
\item[(iv)] The essential spectrum  of $-\Delta_{\alpha,Y}$ is purely absolutely continuous and coincides with the non-negative half-line, the singular continuous spectrum is absent, and there are no positive eigenvalues:
\[
\begin{split}
 \sigma_{\mathrm{ess}}(-\Delta_{\alpha,Y}) \;&=\;\sigma_{\mathrm{ac}}(-\Delta_{\alpha,Y}) \;=\;[0,+\infty) \\
 \sigma_{\mathrm{sc}}(-\Delta_{\alpha,Y})\;&=\;\emptyset \\
 \sigma_{\mathrm{p}}(-\Delta_{\alpha,Y})\;&\subset\; (-\infty,0]\,.
\end{split}
\]
\item[(v)] There is a one-to-one correspondence between the poles $z=\ii\lambda\in\mathcal{E}_{\alpha,Y}^+$ of $\Gamma_{\alpha,Y}(z)^{-1}$ and the 
negative eigenvalues $-\lambda^2$ of $-\Delta_{\alpha,Y}$, counting the 
multiplicity. The eigenfunctions associated with 
the eigenvalue $-\lambda^2<0$ have the form
\[
 u\;=\;\sum_{j=1}^N c_j\,\mathcal{G}_{\ii\lambda}^{y_j},
\]
where $(c_1,\dots,c_N)\in\ker\Gamma_{\alpha,Y}(\ii\lambda)$. In the special case $N=1$ $(Y=\{y\})$,
\[
 \begin{split}
  \sigma_p(-\Delta_{\alpha,Y})\;=\;
  \begin{cases}
   \emptyset & \textrm{if }\alpha\geqslant 0 \\
   \{-(4\pi\alpha)^2\} & \textrm{if }\alpha <0\,,
  \end{cases}
 \end{split}
\]
and the unique negative eigenvalue, when it exists, is non-degenerate and with eigenfunction $\mathcal{G}_{-4\pi\ii\alpha}^{y}$.
\end{itemize} 
\end{theorem}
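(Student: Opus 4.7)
The plan is to extract all five items from the single structural identity \eqref{eq:resolvent_identity}, exploiting that $\Gamma_{\alpha,Y}(z)$ is entire and symmetric in $z$, and becomes real symmetric when $z=\mathrm{i}\lambda$ with $\lambda>0$. The finite-rank correction in \eqref{eq:resolvent_identity} is simultaneously responsible for the structure of the domain \eqref{eq:domain_of_HaY}, for the localisation of the spectral singularities in the discrete set $\mathcal{E}_{\alpha,Y}$, and (via Weyl-type arguments) for the preservation of the essential spectrum.

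For (i), the right-hand side of \eqref{eq:resolvent_identity} defines, for every $z\in\mathbb{C}^+\setminus\mathcal{E}_{\alpha,Y}^+$, a bounded operator $R(z)$ on $L^2(\mathbb{R}^3)$ satisfying the first resolvent identity together with $R(\overline{z})=R(z)^*$; pseudo-resolvent theory then produces a self-adjoint operator whose resolvent is $R(z)$, and a direct computation of $\mathrm{ran}\,R(z)$ matches it with \eqref{eq:domain_of_HaY}--\eqref{eq:action_of_H}. The extension property is immediate: for $u\in C_0^\infty(\mathbb{R}^3\setminus Y)$ one takes $F_z=u$, so $F_z(y_k)=0$ and all singular terms vanish. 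The Friedrichs case corresponds to the formal $\alpha\to\infty$ limit, under which $\Gamma_{\alpha,Y}(z)^{-1}\to\mathbb{O}$ and \eqref{eq:resolvent_identity} collapses to the free resolvent. For (ii), fix $z$: if $u\equiv 0$ on $\mathcal{U}$ then the $|x-y_j|^{-1}$ coefficient at any $y_j\in\mathcal{U}$ must vanish, forcing the corresponding coefficient in the decomposition to be zero; the surviving $\mathcal{G}_z^{y_k}$'s are smooth on $\mathcal{U}$ and solve $(-\Delta-z^2)\mathcal{G}_z^{y_k}=0$ there, hence $-\Delta_{\alpha,Y}u-z^2u=(-\Delta-z^2)F_z=0$ on $\mathcal{U}$, giving the conclusion.

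For (iii) and (v) I would analyse $\ker\Gamma_{\alpha,Y}(z)$ directly. If $\Gamma_{\alpha,Y}(z)c=0$ with $z\in\mathbb{C}^+$, then a local expansion at each $y_k$ of $u:=\sum_j c_j\mathcal{G}_z^{y_j}$ shows that $u$ satisfies the Bethe-Peierls boundary conditions with parameters $\alpha_k$; combined with the exponential decay coming from $\mathfrak{Im}\,z>0$, this places $u\in\mathcal{D}(-\Delta_{\alpha,Y})$ as an eigenfunction with eigenvalue $z^2$. Self-adjointness forces $z^2\in\mathbb{R}$ and hence $z\in\mathrm{i}\mathbb{R}_+$, yielding the location statement in (iii). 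Conversely, an eigenfunction $u$ with eigenvalue $-\lambda^2<0$ lies in $H^2_{\mathrm{loc}}(\mathbb{R}^3\setminus Y)$ and there satisfies $(-\Delta+\lambda^2)u=0$ with $L^2$-decay; together with the boundary conditions at the $y_k$'s this forces $u=\sum_j c_j\mathcal{G}_{\mathrm{i}\lambda}^{y_j}$ with $\Gamma_{\alpha,Y}(\mathrm{i}\lambda)c=0$. The count $\leqslant N$ follows from $\dim\ker\Gamma_{\alpha,Y}(\mathrm{i}\lambda)\leqslant N$, and the $N=1$ case reduces to the one-by-one equation $\alpha+\lambda/(4\pi)=0$, solvable by $\lambda>0$ iff $\alpha<0$.

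Item (iv) is the delicate one, and the step I expect to be the main obstacle. Weyl's theorem applied to the finite-rank resolvent difference \eqref{eq:resolvent_identity} immediately yields $\sigma_{\mathrm{ess}}(-\Delta_{\alpha,Y})=[0,+\infty)$, and (v) confines the point spectrum to $(-\infty,0]$. The plan for absolute continuity on $(0,+\infty)$ and for the absence of embedded positive eigenvalues is to let $z$ approach the real axis from $\mathbb{C}^+$ in \eqref{eq:resolvent_identity}, combining the classical limiting absorption principle for $-\Delta$ in weighted $L^2$-spaces with a uniform control of $\Gamma_{\alpha,Y}(z)^{-1}$ at the boundary. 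The latter presupposes the absence of real positive poles of $\Gamma_{\alpha,Y}(z)^{-1}$, which is precisely the main theorem proved later in this paper; once this is granted, standard Kato-smoothness arguments rule out both $\sigma_{\mathrm{sc}}$ and embedded positive eigenvalues, completing the proof.
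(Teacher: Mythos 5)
The paper does not actually prove Theorem~\ref{thm:general_properties}: right after the statement it is declared to be ``a collection of classical results from \cite{Zorbas-1980,Grossmann-HK-Mebkhout-1980,Grossmann-HK-Mebkhout-1980_CMPperiodic}, which are discussed in detail, e.g., in \cite[Sect.~II.1.1]{albeverio-solvable}.'' So your attempt has to be measured against that classical proof, and one sees that your sketches for (i), (ii), (iii) and (v) do run along the standard Krein--resolvent route (pseudo-resolvent identity to build the self-adjoint operator, locality read off the explicit decomposition of the domain, reduction of the eigenvalue problem to $\ker\Gamma_{\alpha,Y}(\mathrm{i}\lambda)$). Two minor slips there: the ``count $\leqslant N$'' in (v) does not follow from $\dim\ker\Gamma_{\alpha,Y}(\mathrm{i}\lambda)\leqslant N$ (that bounds the multiplicity of \emph{one} eigenvalue), but rather from the fact that the resolvent difference is a rank-$N$ operator and $-\Delta\geqslant 0$; and for $z$ a pole of $\Gamma_{\alpha,Y}(\cdot)^{-1}$ the parametrisation~\eqref{eq:domain_of_HaY} is taken at a different non-exceptional $z'$, so your ``local Bethe--Peierls expansion'' step needs that change of spectral parameter spelled out.

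The genuine gap is in (iv), and it has two parts. First, the claim that ``(v) confines the point spectrum to $(-\infty,0]$'' is not correct: (v) only matches the \emph{negative} eigenvalues with the poles of $\Gamma_{\alpha,Y}(z)^{-1}$ on $\mathrm{i}\R_+$ and says nothing at all about embedded positive eigenvalues. Second, and more fundamentally, to rule out embedded eigenvalues and singular continuous spectrum you propose to invoke Theorem~\ref{th:main} (non-singularity of $\Gamma_{\alpha,Y}(z)$ for real $z\neq 0$) together with the weighted-$L^2$ limiting absorption principle. This reverses the logical dependency of the paper and of the classical literature: Theorem~\ref{thm:general_properties}(iv) is a 1980s result that the paper takes as \emph{input} --- indeed Definition~\ref{def:resonances} and the ``equivalently'' clause of Theorem~\ref{th:main} already presuppose that positive eigenvalues have been excluded --- whereas the non-singularity of $\Gamma_{\alpha,Y}$ on $\R\setminus\{0\}$ is precisely the paper's \emph{new} contribution, and the limiting absorption principle you want to lean on is the 2019 Theorem~\ref{th:res_exp}(i) from \cite{Scandone_ResExp_2019}, a later refinement. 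The classical argument for (iv) is self-contained and needs neither: by the locality you already have from (ii), an eigenfunction $u$ with eigenvalue $\mu>0$ belongs to $H^2_{\mathrm{loc}}(\R^3\setminus Y)$ and satisfies $(-\Delta-\mu)u=0$ there; Rellich's theorem (applied outside a ball containing $Y$) forces $u$ to be compactly supported, and unique continuation on the connected open set $\R^3\setminus Y$ then gives $u\equiv 0$. Once positive eigenvalues are excluded, $\sigma_{\mathrm{ac}}=[0,\infty)$ and $\sigma_{\mathrm{sc}}=\emptyset$ follow from the finite-rank resolvent difference via Kato--Birman trace-class scattering theory (existence and asymptotic completeness of the wave operators for $(-\Delta_{\alpha,Y},-\Delta)$), again with no reference to the later positive-resonance result.
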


Theorem \ref{thm:general_properties} is a collection of classical results from \cite{Zorbas-1980,Grossmann-HK-Mebkhout-1980,Grossmann-HK-Mebkhout-1980_CMPperiodic}, which are discussed in detail, e.g., in \cite[Sect.~II.1.1]{albeverio-solvable}.


In addition to Theorem \ref{thm:general_properties}, the spectral behavior of $-\Delta_{\alpha,Y}$ on the real line, and in particular the nature of the spectral point $z^2=0$, was discussed by one of us in \cite{Scandone_ResExp_2019}, and we shall now review those results.

Let us first remark, as emerges from Theorem \ref{thm:general_properties}, that the eigenvalue zero is absent when $N=1$, but may occur when $N\geqslant 2$: examples of configurations of the $y_j$'s that produce a null eigenvalue are shown in \cite[Sect.~3]{Scandone_ResExp_2019}.

In \cite{Scandone_ResExp_2019} a limiting absorption principle for $-\Delta_{\alpha}$ was established, in the spirit of the classical Agmon-Kuroda theory for the free Laplacian \cite{Agmon-annSNS-1975,Kuroda1978_intro_scatt_theory}, and a low-energy resolvent expansion was produced, analogously to the case of regular Schr\"odinger operators with scalar potential \cite{Agmon-annSNS-1975,Jensen-Kato-1979}.


\begin{theorem}[\cite{Scandone_ResExp_2019}]\label{th:res_exp}
Let $\sigma>0$ and let $\mathbf{B}_{\sigma}$ be the Banach space
\begin{equation*}
\mathbf{B}_{\sigma}\;:=\;\mathcal{B}(L^2(\R^3,\langle x\rangle^{2+\sigma}\ud x),L^2(\R^3,\langle x\rangle^{-2-\sigma}\ud x))\,.
\end{equation*}
\begin{itemize}
 \item[(i)] For every $z\in\C^+\setminus\mathcal{E}_{\alpha,Y}^+$ one has $(-\Delta_{\alpha,Y}-z^2\mathbbm{1})^{-1}\in\mathbf{B}_{\sigma}$, and the map $\C^+\setminus\mathcal{E}_{\alpha,Y}^+\ni z\mapsto (-\Delta_{\alpha,Y} -z^2\mathbbm{1})^{-1}\in \mathbf{B}_{\sigma}$ can be continuously extended to $\R\setminus\mathcal{E}_{\alpha,Y}^0$.
 \item[(ii)] In a real neighborhood of $z=0$, one has the expansion
\begin{equation}\label{main_exp}
(-\Delta_{\alpha,Y}-z^2\mathbbm{1})^{-1}\;=\;z^{-2}R_{-2}+z^{-1}R_{-1}+R_0(z)\,,
\end{equation}
for some $R_{-2},R_{-1}\in \mathbf{B}_{\sigma}$ and some continuous $\mathbf{B}_{\sigma}$-valued map $z\mapsto R_0(z)$.
Moreover, $R_{-2}\neq \mathbbm{O}$ if and only if zero is an eigenvalue for $-\Delta_{\alpha,Y}$. 
\end{itemize}
\end{theorem}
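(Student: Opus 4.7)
The plan is to reduce both parts to the resolvent identity \eqref{eq:resolvent_identity}, which gives
\begin{equation*}
(-\Delta_{\alpha,Y}-z^2\mathbbm{1})^{-1}\;=\;(-\Delta-z^2\mathbbm{1})^{-1}+\sum_{j,k=1}^{N}\bigl(\Gamma_{\alpha,Y}(z)^{-1}\bigr)_{jk}\,|\mathcal{G}_z^{y_j}\rangle\langle\overline{\mathcal{G}_z^{y_k}}|,
\end{equation*}
and to carry the $\mathbf{B}_\sigma$-analysis through each factor separately.

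For part (i), the free-resolvent piece is covered by the Agmon--Kuroda limiting absorption principle, which extends $z\mapsto(-\Delta-z^2\mathbbm{1})^{-1}$ continuously from $\mathbb{C}^+$ to $\overline{\mathbb{C}^+}$ in the $\mathbf{B}_\sigma$-norm. In the finite-rank correction, the matrix inverse $\Gamma_{\alpha,Y}(z)^{-1}$ is meromorphic on $\mathbb{C}$ (since $\Gamma_{\alpha,Y}(\cdot)$ is entire), and by the very definition of $\mathcal{E}_{\alpha,Y}^{0}$ it is continuous on $\mathbb{R}\setminus\mathcal{E}_{\alpha,Y}^{0}$. It then only remains to verify that $z\mapsto|\mathcal{G}_z^{y_j}\rangle\langle\overline{\mathcal{G}_z^{y_k}}|$ is continuous as a $\mathbf{B}_\sigma$-valued map: a direct check shows $\mathcal{G}_z^{y_j}\in L^2(\mathbb{R}^3,\langle x\rangle^{-2-\sigma}\ud x)$ (the local $|x-y_j|^{-1}$ singularity is square-integrable in three dimensions, and the weight $\langle x\rangle^{-2-\sigma}$ tames the $|x|^{-1}$ decay at infinity), and dominated convergence yields continuity in $z$. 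Combining these ingredients proves (i).

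For part (ii), I would Laurent-expand each factor at $z=0$. The free resolvent kernel $e^{\ii z|x-y|}/(4\pi|x-y|)$ is entire in $z$ and contributes analytically. The rank-one operators $|\mathcal{G}_z^{y_j}\rangle\langle\overline{\mathcal{G}_z^{y_k}}|$ admit a Taylor expansion in $z$ with $\mathbf{B}_\sigma$-controlled remainder, thanks to $\partial_z\mathcal{G}_z^{y_j}(x)=\tfrac{\ii}{4\pi}e^{\ii z|x-y_j|}$ being uniformly bounded in $x$. The entire singular content therefore resides in the scalar matrix $\Gamma_{\alpha,Y}(z)^{-1}$, which I would analyse by a Feshbach--Schur reduction along $V:=\ker\Gamma_{\alpha,Y}(0)$: writing $\Gamma_{\alpha,Y}(z)=\Gamma_{\alpha,Y}(0)+zL+z^2 M(z)$ with $L=-\tfrac{\ii}{4\pi}|e\rangle\langle e|$, $e:=(1,\dots,1)^T$, and $M(\cdot)$ entire, the complementary block $(1-P)\Gamma_{\alpha,Y}(z)(1-P)$ (where $P$ projects onto $V$) is invertible for small $|z|$, and the Schur-complement formula yields
\begin{equation*}
P\,\Gamma_{\alpha,Y}(z)^{-1}P\;=\;\bigl(z\,PLP+z^2\,\widetilde{M}(z)\bigr)^{-1}
\end{equation*}
for some entire $\widetilde{M}(\cdot)$.

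The main obstacle is then to bound the pole order of this inverse at $z=0$ by $2$ and to pin down the $z^{-2}$ coefficient. The key algebraic input is that $PLP=-\tfrac{\ii}{4\pi}|Pe\rangle\langle Pe|$ has rank at most one on $V$. Splitting $V=\mathrm{span}(Pe)\oplus V_e$ with $V_e:=V\cap e^\perp$---a decomposition consistent with Theorem \ref{thm:general_properties}(v), by which a vector $c\in V$ produces the zero-energy $L^2$-eigenfunction $\sum_j c_j\mathcal{G}_0^{y_j}$ precisely when $\sum_j c_j=\langle e,c\rangle=0$---on $\mathrm{span}(Pe)$ the leading term $zPLP$ is already invertible and contributes a $z^{-1}$ singularity, while on $V_e$ the effective leading term is $z^2\,P_{V_e}\widetilde{M}(0)P_{V_e}$, whose inversion (after a further Schur reduction on $V_e$) contributes a $z^{-2}$ singularity when $V_e\neq\{0\}$ and no $z^{-2}$ contribution otherwise. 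Tracing back through the resolvent identity yields \eqref{main_exp}, and also shows that $\mathrm{ran}\,R_{-2}$ is spanned by the zero-energy eigenfunctions; in particular $R_{-2}\neq\mathbbm{O}$ if and only if $V_e\neq\{0\}$, i.e.\ if and only if $0$ is an eigenvalue of $-\Delta_{\alpha,Y}$.
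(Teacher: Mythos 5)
The paper imports Theorem \ref{th:res_exp} from \cite{Scandone_ResExp_2019} and does not reprove it, so there is no internal proof to compare against; what follows is a review of your argument on its own terms. Part (i) is sound: the Agmon--Kuroda limiting absorption principle for $(-\Delta-z^2\mathbbm{1})^{-1}$, the meromorphy of $z\mapsto\Gamma_{\alpha,Y}(z)^{-1}$, the membership $\mathcal{G}_z^{y_j}\in L^2(\R^3,\langle x\rangle^{-2-\sigma}\ud x)$ for every $\sigma>0$, and dominated convergence together give the stated continuity.

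In part (ii), the justification for the $\mathbf{B}_\sigma$-Taylor expansion of the rank-one operators does not cover the full range $\sigma>0$. You invoke the fact that $\partial_z\mathcal{G}_z^{y_j}(x)=\tfrac{\ii}{4\pi}e^{\ii z|x-y_j|}$ is uniformly bounded in $x$, but boundedness is not the relevant condition: one needs this derivative to lie in $L^2(\R^3,\langle x\rangle^{-2-\sigma}\ud x)$, and for a bounded, non-decaying function that requires $\int_{\R^3}\langle x\rangle^{-2-\sigma}\ud x<\infty$, i.e.\ $\sigma>1$. For $0<\sigma\leqslant1$ the map $z\mapsto|\mathcal{G}_z^{y_j}\rangle\langle\overline{\mathcal{G}_z^{y_k}}|$ is \emph{not} $\mathbf{B}_\sigma$-differentiable, and the proposed term-by-term multiplication of the Laurent expansion of $\Gamma_{\alpha,Y}(z)^{-1}$ against a $\mathbf{B}_\sigma$-Taylor expansion of the rank-one pieces breaks down. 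Getting \eqref{main_exp} for all $\sigma>0$ hinges on a cancellation that your Feshbach--Schur discussion sets the stage for but never actually invokes at this step: the offending first-order term is proportional to $\partial_z\mathcal{G}_0^{y_j}=\tfrac{\ii}{4\pi}$, a constant function independent of $j$, and it is contracted against the residue matrix $A_{-2}$, which is real symmetric with range contained in $V_e=\ker\Gamma_{\alpha,Y}(0)\cap e^\perp$ (this is precisely what the rank-one structure $PLP\propto|Pe\rangle\langle Pe|$ buys you). Hence $A_{-2}e=0$, so $\sum_j(A_{-2})_{jk}\,\partial_z\mathcal{G}_0^{y_j}=\tfrac{\ii}{4\pi}\sum_j(A_{-2})_{jk}=0$, and the non-decaying contributions drop out of the coefficients $R_{-2}$ and $R_{-1}$. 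You would need to make this cancellation explicit (and run the analogous check on the second-order terms entering $R_{-2}$) before asserting the expansion; as written, the step is a gap.
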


In view of Theorem \ref{th:res_exp}(ii) and of the heuristic idea of a resonance as the existence of a non-$L^2$ solution $u$ to $-\Delta_{\alpha,Y}u=z^2u$, it is natural to say that $-\Delta_{\alpha,Y}$ has a \emph{zero} resonance when $(-\Delta_{\alpha,Y}-z^2\mathbbm{1})^{-1}=O(z^{-1})$ as $z\to 0$, that is, with respect to the low-energy asymptotics \eqref{main_exp}, when $R_{-2}=\mathbbm{O}$ and $R_{-1}\neq \mathbbm{O}$.

There is an equivalent way to formulate such an occurrence (the proof of which is also deferred to Section \ref{sec:proofs}).

\begin{lemma}\label{lem:AiffB}
 The following facts are equivalent:
 \begin{itemize}
  \item[(i)] in the asymptotics \eqref{main_exp}, $R_{-2}=\mathbbm{O}$ and $R_{-1}\neq \mathbbm{O}$;
  \item[(ii)] the matrix $\Gamma_{\alpha,Y}(0)$ is singular, but zero is not an eigenvalue of $-\Delta_{\alpha,Y}$.
 \end{itemize}
\end{lemma}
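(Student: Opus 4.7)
The plan is to reduce the equivalence to an elementary algebraic statement about the matrix $\Gamma_{\alpha,Y}(z)$ at $z=0$, using the resolvent identity \eqref{eq:resolvent_identity} as the bridge between the operator-valued expansion \eqref{main_exp} and the $\mathbb{C}^{N\times N}$-valued Laurent expansion of $\Gamma_{\alpha,Y}(z)^{-1}$ near $z=0$. Observe that Theorem \ref{th:res_exp}(ii) already supplies the equivalence $R_{-2}=\mathbbm{O}\;\Leftrightarrow\;\textrm{``zero is not an eigenvalue of }-\Delta_{\alpha,Y}\textrm{''}$, so what remains is to show that, conditional on $R_{-2}=\mathbbm{O}$, the non-vanishing $R_{-1}\neq\mathbbm{O}$ is equivalent to the singularity of $\Gamma_{\alpha,Y}(0)$.

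The first step is to transfer the Laurent information from the operator side to the matrix side. Since the free resolvent $(-\Delta-z^2\mathbbm{1})^{-1}$ is $\mathbf{B}_\sigma$-analytic at $z=0$ and each map $z\mapsto \mathcal{G}_z^{y_j}$ is analytic into $L^2(\mathbb{R}^3,\langle x\rangle^{-2-\sigma}\ud x)$ (the local singularity $|x-y_j|^{-1}$ is square-integrable in three dimensions), every rank-one operator $K_{jk}(z):=|\mathcal{G}_z^{y_j}\rangle\langle\overline{\mathcal{G}_z^{y_k}}|$ is $\mathbf{B}_\sigma$-analytic near $z=0$. Writing the Laurent expansion $\Gamma_{\alpha,Y}(z)^{-1}=z^{-2}\Lambda_{-2}+z^{-1}\Lambda_{-1}+\Lambda_0(z)$ (its pole order at $0$ being at most two, as forced by \eqref{main_exp}) and matching coefficients in \eqref{eq:resolvent_identity} would give
\begin{equation*}
R_{-2}\;=\;\sum_{j,k=1}^N (\Lambda_{-2})_{jk}\,K_{jk}(0), \qquad R_{-1}\;=\;\sum_{j,k=1}^N (\Lambda_{-1})_{jk}\,K_{jk}(0)\quad\textrm{when }\Lambda_{-2}=\mathbbm{O}.
\end{equation*}
A short duality argument based on the linear independence of $\mathcal{G}_0^{y_1},\dots,\mathcal{G}_0^{y_N}$ (each of which is harmonic away from a distinct singularity $y_j$) would then yield the linear independence of $\{K_{jk}(0)\}_{j,k=1}^N$ in $\mathbf{B}_\sigma$. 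Consequently $R_{-2}=\mathbbm{O}\Leftrightarrow\Lambda_{-2}=\mathbbm{O}$, and under this condition $R_{-1}\neq\mathbbm{O}\Leftrightarrow\Lambda_{-1}\neq\mathbbm{O}$, reducing the whole statement to the algebraic claim that $\Lambda_{-2}=\Lambda_{-1}=\mathbbm{O}$ if and only if $\Gamma_{\alpha,Y}(0)$ is invertible.

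This reduced claim is then straightforward. If $\Gamma_{\alpha,Y}(0)$ is invertible, continuity of the entries makes $\Gamma_{\alpha,Y}(z)^{-1}$ analytic in a neighbourhood of $0$, whence $\Lambda_{-2}=\Lambda_{-1}=\mathbbm{O}$. Conversely, if $\Lambda_{-2}=\Lambda_{-1}=\mathbbm{O}$, then $\Gamma_{\alpha,Y}(z)^{-1}\to\Lambda_0(0)$ as $z\to 0$, so passing to the limit in $\Gamma_{\alpha,Y}(z)\,\Gamma_{\alpha,Y}(z)^{-1}=\mathbbm{1}$ yields $\Gamma_{\alpha,Y}(0)\,\Lambda_0(0)=\mathbbm{1}$, i.e., $\Gamma_{\alpha,Y}(0)$ is invertible. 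The main technical obstacle in this plan is establishing the linear independence of $\{K_{jk}(0)\}_{j,k=1}^N$ in $\mathbf{B}_\sigma$, which is what legitimises the bookkeeping of Laurent coefficients; beyond that, the matrix argument is immediate.
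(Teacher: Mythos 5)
Your proposal is correct and follows essentially the same route as the paper: Theorem \ref{th:res_exp}(ii) dispatches the $R_{-2}$ part, the resolvent identity \eqref{eq:resolvent_identity} transfers the Laurent data from the operator-valued expansion \eqref{main_exp} to the matrix-valued Laurent expansion of $\Gamma_{\alpha,Y}(z)^{-1}$, and the final step is the elementary observation that a matrix function $M(z)$ continuous at $0$ with $M(0)$ singular cannot have a bounded inverse as $z\to 0$. The only cosmetic difference is that the paper imports the matrix-side expansion $\Gamma_{\alpha,Y}(z)^{-1}=z^{-2}A_{-2}+z^{-1}A_{-1}+O(1)$ from \cite[Prop.~5]{Scandone_ResExp_2019} rather than re-deriving it from \eqref{main_exp}, while the coefficient-matching and the linear independence of the rank-one operators $K_{jk}(0)$ that you (rightly) flag as the key technical ingredient are precisely what underpin the paper's terse ``one compares the limits $z\to 0$ \ldots and concludes'' step.
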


The resolvent identity \eqref{eq:resolvent_identity} and Lemma \ref{lem:AiffB} above finally motivate the following precise notion of resonance.

\begin{definition}\label{def:resonances}
 Let $z\in\mathbb{C}$. The operator $-\Delta_{\alpha,Y}$ has a resonance at $z^2$ if the matrix $\Gamma_{\alpha,Y}(z)$ is singular, but $z^2$ is not an eigenvalue of $-\Delta_{\alpha,Y}$.
\end{definition}

As announced in previous Section's introduction, in this work we focus on the \emph{exclusion of positive resonances} -- informally speaking, to draw a parallel to \eqref{eq:EVeqn}, we shall conclude that the equation
\begin{equation}\label{eq:EVeqn-perturbed}
 -\Delta_{\alpha,Y} u \;=\;\mu u\qquad\qquad (\mu>0)
\end{equation}
admits no non-trivial solutions, be they in $L^2(\mathbb{R}^3)$ (non-existence of embedded eigenvalues, as seen already in Theorem \ref{thm:general_properties}(iv)) or outside of $L^2(\mathbb{R}^3)$ (non-existence of embedded resonances). More precisely, in this work we prove that for any $z\in\mathbb{R}\setminus\{ 0\}$, the spectral point $\mu=z^2$ is not a resonance.

In view of Definition \ref{def:resonances} and Theorem \ref{thm:general_properties}(iv), the absence of positive resonances is tantamount as the non-singularity of $\Gamma_{\alpha,Y}(z)$ for any $z\in\mathbb{R}\setminus\{ 0\}$. This is precisely the form of our main result.

\begin{theorem}\label{th:main}
For every $\alpha\in\R^N$, every collection $Y=\{y_1,\ldots y_N\}$ of $N$ distinct points in $\mathbb{R}^3$, and every $z\in\mathbb{R}\setminus\{ 0\}$, the matrix $\Gamma_{\alpha,Y}(z)$ is non-singular. Equivalently, the self-adjoint operator $-\Delta_{\alpha,Y}$ has no  real positive resonances.
\end{theorem}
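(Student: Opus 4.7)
The plan is to argue by contradiction: assume $\Gamma_{\alpha,Y}(z)\,v = 0$ for some $z\in\R\setminus\{0\}$ and some non-zero $v=(v_1,\dots,v_N)\in\C^N$, and derive $v=0$. Since $\Gamma_{\alpha,Y}(-z)=\overline{\Gamma_{\alpha,Y}(z)}$ whenever $z\in\R$, we may assume $z>0$ without loss of generality.

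\emph{Step 1: extract a positive-semi-definite identity from the imaginary part of $\Gamma_{\alpha,Y}(z)$.} The matrix is complex symmetric, so we write $\Gamma_{\alpha,Y}(z)=A+\ii B$ with $A,B$ real symmetric; a direct computation based on $\im \mathcal{G}_z^{y_jy_k}=\tfrac{\sin(z|y_j-y_k|)}{4\pi|y_j-y_k|}$ yields $B=-\tfrac{z}{4\pi}M$, where $M$ has $M_{jj}=1$ and $M_{jk}=\sinc(z|y_j-y_k|)$ for $j\neq k$. The classical spherical-mean identity $\sinc(z|p|)=\tfrac{1}{4\pi z^2}\int_{|\xi|=z}e^{\ii\xi\cdot p}\,d\sigma(\xi)$ (valid for $z>0$ and $p\in\R^3$) then gives
\begin{equation*}
   \langle v,Mv\rangle_{\C^N}\;=\;\frac{1}{4\pi z^2}\int_{|\xi|=z}\Big|\sum_{k=1}^N v_k\, e^{-\ii\xi\cdot y_k}\Big|^2 d\sigma(\xi)\,,
\end{equation*}
so $M$ is positive semi-definite. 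Since $\Gamma_{\alpha,Y}(z)\,v=0$ forces $\im\langle v,\Gamma_{\alpha,Y}(z)\,v\rangle=\langle v,Bv\rangle=0$, one concludes that the entire function $f(\xi):=\sum_k v_k\,e^{-\ii\xi\cdot y_k}$ vanishes identically on the sphere $\{|\xi|=z\}$.

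\emph{Step 2: Rellich uniqueness plus singularity reading-off.} I would then interpret the result of Step 1 as the vanishing of the far-field pattern of $u(x):=\sum_{k=1}^N v_k\,\mathcal{G}_z^{y_k}(x)$, a superposition of outgoing Helmholtz point sources solving $(-\Delta-z^2)u=0$ on $\R^3\setminus Y$. The standard asymptotics $|x-y_k|=|x|-\hat x\cdot y_k+O(|x|^{-1})$ with $\hat x:=x/|x|$ give, as $|x|\to\infty$,
\begin{equation*}
   u(x)\;=\;\frac{e^{\ii z|x|}}{4\pi|x|}\,\big(\,f(z\hat x)+O(|x|^{-1})\,\big)\,,
\end{equation*}
so $f\equiv 0$ on $\{|\xi|=z\}$ upgrades the decay to $u(x)=O(|x|^{-2})$. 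Rellich's uniqueness theorem for the exterior Helmholtz problem then yields $u\equiv 0$ outside any ball containing $Y$, and unique continuation (since $-\Delta-z^2$ is elliptic with analytic coefficients and $\R^3\setminus Y$ is connected) propagates this to $u\equiv 0$ on all of $\R^3\setminus Y$. Finally, reading off the Coulomb-type singularity $u(x)=\tfrac{v_k}{4\pi|x-y_k|}+O(1)$ as $x\to y_k$ forces $v_k=0$ for every $k$, contradicting $v\neq 0$.

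The crux of the argument is the observation in Step 1 that the imaginary part of $\Gamma_{\alpha,Y}(z)$ assembles precisely the $L^2(S^2)$-norm of the far-field pattern of the radiating superposition $u$: this converts a purely algebraic non-singularity statement into a classical scattering-theoretic question. I expect the main technical care to be required in the rigorous invocation of Rellich's theorem (checking the $L^2$-decay hypothesis on large spheres) and in the unique-continuation step on the punctured domain $\R^3\setminus Y$, which is connected precisely because we are in dimension three and $Y$ is finite.
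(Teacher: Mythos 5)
Your proof is correct, and its second step takes a genuinely different route from the paper's. The first step---splitting $\Gamma_{\alpha,Y}(z)$ into real and imaginary parts and expressing the imaginary quadratic form as a spherical $L^2$-norm via the mean-value identity for $\sinc$---coincides with the paper's: there one writes $\Gamma_{\alpha,Y}=A_{\alpha,Y}-\ii B_Y$ (after scaling to $z=1$) and proves positive semi-definiteness of $B_Y$ by exactly this integral representation. Where you diverge is in showing that $f\equiv 0$ on the sphere forces $v=0$. You interpret $f$ as the far-field pattern of the radiating field $u=\sum_k v_k\,\mathcal{G}_z^{y_k}$, invoke Rellich's uniqueness theorem and unique continuation (here simply the real-analyticity of $u$ on the connected open set $\R^3\setminus Y$), and finally read off the Coulomb singularity at each $y_k$. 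The paper instead runs a self-contained algebraic induction on $N$, in the spirit of zu Castell, Filbir, and Szwarc: parametrize the sphere along a great circle $p=a\sin t+b\cos t$ with $a$ chosen so that the projections $a\cdot y_j$ are all distinct (Lemma \ref{lem_ab_exist} guarantees this), analytically continue the resulting one-variable trigonometric sum to $t=-\ii\tau$, divide by the fastest-growing exponential, and send $\tau\to+\infty$ to kill the top coefficient. You also fold the paper's separate linear-algebra Lemma \ref{linear} into the contradiction via $\mathfrak{Im}\langle v,\Gamma_{\alpha,Y}(z)v\rangle=0$, whereas the paper establishes positive definiteness of $B_Y$ outright and only then applies that lemma. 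Your scattering-theoretic route is conceptually illuminating---it converts the algebraic non-singularity statement into the classical fact that a non-trivial superposition of outgoing Helmholtz point sources cannot have a vanishing far field---but it requires importing Rellich's theorem; the paper's route is longer but entirely elementary and self-contained, using only linear algebra and one-variable analytic continuation.
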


Theorem \ref{th:main}, combined with Theorem \ref{th:res_exp}, essentially completes the picture of the spectral theory for three-dimensional Schr\"odinger operators with finitely many point interactions.

For completeness of presentation, at the end of the proof in the following Section, we shall comment on the comparison with the previous literature on the positive resonances of $-\Delta_{\alpha,Y}$, and in the subsequent Section \ref{sec:remarks_open_problems} we shall connect our main theorem with recent dispersive and scattering results for the propagator $^{\ii t \Delta_{\alpha,Y}}$, together with some relevant open problems.

\section{Proof of the main Theorem and additional remarks}\label{sec:proofs}

This Section is mainly devoted to the proof of Theorem \ref{th:main}. In terms of the notation \eqref{eq:EpmE0}, one has to prove that the set $\mathcal{E}_{\alpha,Y}^0\setminus\{0\}$ of non-zero poles of $\Gamma_{\alpha,Y}(z)^{-1}$ on the real line is empty.

In practice it suffices to only consider $z>0$, for $\Gamma_{\alpha,Y}(-z)=\Gamma_{\alpha,Y}(z)^*$ for any $z\in\mathbb{R}$, and hence $\mathcal{E}_{\alpha,Y}^0$  is symmetric with respect to $z=0$.

In fact, $\mathcal{E}_{\alpha,Y}^0$ is also finite. Indeed, $\mathcal{E}_{\alpha,Y}^0\subset\mathcal{E}_{\alpha,Y}$ is a discrete set, and for $z\in\mathbb{R}$ one has $\Gamma_{\alpha,Y}(z)=-\frac{\ii z}{4\pi}\mathbbm{1}+\Lambda_{\alpha,Y}(z)$ where the matrix norm of $\Lambda_{\alpha,Y}(z)$ is uniformly bounded in $z$, therefore $\Gamma_{\alpha,Y}(z)$ is invertible for large enough $z$.

Let us first present the proof of Lemma \ref{lem:AiffB}, which was at the basis of the definition of resonance for $-\Delta_{\alpha,Y}$.

\begin{proof}[Proof of Lemma \ref{lem:AiffB}]
We recall \cite[Section 2]{Jensen-Kato-1979} that $(-\Delta-z^2\mathbbm{1})^{-1}\in\mathbf{B}_{\sigma}$ for every $z\in\C^+$, and the map $\C^+\ni z\mapsto (-\Delta -z^2\mathbbm{1})^{-1}\in \mathbf{B}_{\sigma}$ can be continuously extended to the real line. Moreover, we observe that the map $\R\ni z\to |\mathcal{G}^{y_1}_z\rangle\langle\overline{\mathcal{G}^{y_2}_z}|\in\mathbf{B}_{\sigma}$ is continuous for any $y_1,y_2\in\R^3$. Owing to these facts, one compares the limits $z\to 0$ in the resolvent identity \eqref{eq:resolvent_identity}, in the resolvent expansion \eqref{main_exp}, and in in the low-energy expansion
$$\Gamma_{\alpha,Y}(z)^{-1}\;=\;z^{-2}A_{-2}+z^{-1}A_{-1}+O(1)$$
established in \cite[Proposition 5]{Scandone_ResExp_2019}, and concludes
\begin{equation}\tag{*}\label{eq:iffs}
R_{-2}\;\neq\; \mathbbm{O}\;\Leftrightarrow\; A_{-2}\;\neq\; \mathbbm{O}\,,\qquad R_{-1}\;\neq\; \mathbbm{O}\;\Leftrightarrow\; A_{-1}\;\neq\; \mathbbm{O}\,.
\end{equation}
We can now prove the desired equivalence.

(i)$\,\Rightarrow\,$(ii). Since $R_{-2}=\mathbbm{O}$, Theorem \ref{th:res_exp}(ii) guarantees that $z=0$ is not an eigenvalue for $-\Delta_{\alpha,Y}$. Moreover, since $R_{-1}\neq \mathbbm{O}$, by \eqref{eq:iffs} also $A_{-1}\neq \mathbbm{O}$, which implies in particular that $\Gamma_{\alpha,Y}(0)$ is singular.

(ii)$\,\Rightarrow\,$(i). Since $z=0$ is not an eigenvalue for $-\Delta_{\alpha,Y}$, Theorem \ref{th:res_exp}(ii) guarantees that $R_{-2}= \mathbbm{O}$, whence also $A_{-2}=\mathbbm{O}$ owing to \eqref{eq:iffs}. Since $\Gamma_{\alpha,Y}(0)$ is singular, necessarily $A_{-1}\neq \mathbbm{O}$, whence also $R_{-1}\neq \mathbbm{O}$ again owing to \eqref{eq:iffs}.
\end{proof}

Our argument for Theorem \ref{th:main} is based upon the following useful result in linear algebra. We denote by $ Sym_{N}(\R)$ the space of $N\times N$ symmetric real matrices.

\begin{lemma}\label{linear}
Let $A,B \in Sym_{N}(\R)$,  and assume furthermore that $B$ is positive definite. Then $A-\ii B$ is non-singular.
\end{lemma}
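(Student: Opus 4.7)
The plan is to show that any $v\in\C^N$ in the kernel of $A-\ii B$ must vanish, by extracting real and imaginary parts of the scalar $\langle v,(A-\ii B)v\rangle$ and exploiting positivity of $B$.

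First I would assume $(A-\ii B)v=\mathbb{O}$ for some $v\in\C^N$ and take the Hermitian inner product with $v$, obtaining
\[
\langle v,Av\rangle\;=\;\ii\,\langle v,Bv\rangle.
\]
Next I would verify that both sides of this identity are in fact real numbers: since $A,B\in Sym_N(\R)$, a direct computation (using $A_{jk}=A_{kj}\in\R$, and likewise for $B$) gives $\overline{\langle v,Av\rangle}=\langle v,Av\rangle$, and similarly for $B$. This forces $\langle v,Av\rangle=\ii\langle v,Bv\rangle$ with the left-hand side real and the right-hand side purely imaginary, so both quantities must equal zero; in particular $\langle v,Bv\rangle=0$.

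Finally I would upgrade the positive definiteness of $B$ from real vectors (as given) to complex vectors. Writing $v=x+\ii y$ with $x,y\in\R^N$, one has
\[
\langle v,Bv\rangle\;=\;\langle x,Bx\rangle+\langle y,By\rangle,
\]
since the real-symmetric matrix $B$ makes the cross terms $\ii\langle x,By\rangle-\ii\langle y,Bx\rangle$ cancel. Each summand is non-negative, and both vanish only if $x=y=0$, i.e.\ $v=\mathbb{O}$. Combined with the previous step, this yields $v=\mathbb{O}$ and hence the non-singularity of $A-\ii B$.

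There is no real obstacle here; the only mild subtlety is that the hypothesis on $B$ is phrased in terms of real test vectors, whereas the kernel of $A-\ii B$ lives a priori in $\C^N$, so the short computation decomposing $v$ into real and imaginary parts is the one bookkeeping step that has to be made explicit.
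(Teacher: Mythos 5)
Your proof is correct, and it takes a genuinely different (and arguably cleaner) route than the paper's. You test the would-be kernel vector against the Hermitian quadratic form $\langle v,(A-\ii B)v\rangle$, observe that $\langle v,Av\rangle$ and $\langle v,Bv\rangle$ are both real because $A,B$ are real symmetric, and then conclude $\langle v,Bv\rangle=0$; positive definiteness of $B$ (extended from $\R^N$ to $\C^N$ by the real/imaginary-part decomposition you spell out) then kills $v$. The paper instead argues by contradiction from $(A-\ii B)(v+\ii w)=0$ with $v,w\in\R^N$: it applies $B^{-1}$, separates real and imaginary parts to get $Cv=-w$, $Cw=v$ with $C=B^{-1}A$, deduces $C^2v=-v$, and then observes that $C$ is similar to the real symmetric matrix $B^{-1/2}AB^{-1/2}$, so $C^2$ has non-negative spectrum and cannot have $-1$ as an eigenvalue. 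Your version avoids both the inversion $B^{-1}$ and the similarity/spectral-theorem step, needing only that the quadratic form of a real symmetric $B$ is real-valued and that real-positive-definiteness promotes to Hermitian positive-definiteness; the paper's version, by contrast, makes the algebraic structure of $B^{-1}A$ explicit, which is a slightly heavier but also illuminating angle. Either is a complete proof.
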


\begin{proof}
Suppose for contradiction that $A-\ii B$ is singular. Then there exist $v,w\in\R^n$, at least one of which is non-zero, such that
\begin{equation*}
(A-\ii B)(v+\ii w)\;=\;0\,.
\end{equation*}
We can exclude for sure that $v=0$, for in this case $Bw+\ii Aw=0$, whence in particular $Bw=0$ and therefore $w=0$, against the assumption that $v+\ii w\neq 0$. 
Applying $B^{-1}$ to the identity above, separating real and imaginary parts, and setting $C:=B^{-1}A$, one gets
\[
 Cv=-w\,,\qquad Cw=v\,,
\]
which implies $C^2v=-v$. As $v\neq 0$, the conclusion is that $-1$ is an eigenvalue for $C^2$. However, the matrix $C=B^{-1}A$ can be diagonalised over $\R$, since it is similar to the symmetric matrix $B^{-1/2}AB^{-1/2}$, and therefore $C^2$ is similar to a positive semi-definite matrix. Hence $-1$ cannot be in the spectrum of $C^2$.
\end{proof}

We shall also make use of the following property.

\begin{lemma}\label{lem_ab_exist}
Let $N,d\in\mathbb{N}$ and let  $y_1,\dots,y_N$ be distinct vectors in $\mathbb{R}^d$. Then there exists a unit vector $a\in\mathbb{R}^d$ such that the numbers $a\cdot v_j$ are all distinct.
\end{lemma}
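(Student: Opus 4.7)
The plan is a standard ``avoid a finite union of hyperplanes'' argument. For each pair of indices $j\neq k$ the condition $a\cdot y_j = a\cdot y_k$ is equivalent to $a\cdot(y_j-y_k)=0$. Since $y_j\neq y_k$, the vector $y_j-y_k$ is nonzero, so the set
\[
H_{jk}\;:=\;\{a\in\mathbb{R}^d\,:\,a\cdot(y_j-y_k)=0\}
\]
is a proper linear hyperplane of $\mathbb{R}^d$. The vectors $a$ that fail the conclusion of the lemma form the finite union $S:=\bigcup_{1\leqslant j<k\leqslant N}H_{jk}$.

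The key observation is that a finite union of proper linear subspaces of $\mathbb{R}^d$ cannot exhaust $\mathbb{R}^d$. I would justify this in whichever way is most convenient: for instance, each $H_{jk}$ has $d$-dimensional Lebesgue measure zero, so $S$ does as well, and therefore $\mathbb{R}^d\setminus S$ is nonempty (in fact has full measure). Pick any $\tilde a\in\mathbb{R}^d\setminus S$; then $\tilde a\neq 0$ (since $0\in S$ whenever $N\geqslant 2$, trivially), and $a:=\tilde a/|\tilde a|$ is a unit vector with the required property, because the set $S$ is invariant under nonzero scaling.

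There is essentially no obstacle: the only borderline case is $d=1$, where each $H_{jk}=\{0\}$ and one simply takes $a=1$. The statement is purely elementary linear algebra and is used later only to produce a single generic direction along which the projections of the $y_j$'s are pairwise distinct.
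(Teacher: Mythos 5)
Your argument is correct and is essentially the same as the paper's: both exclude the finite union of hyperplanes $\{a:a\cdot(y_j-y_k)=0\}$ by a measure-zero argument, the only cosmetic difference being that the paper works with surface measure on the unit sphere $\mathbb{S}^{d-1}$ directly, while you use Lebesgue measure on $\mathbb{R}^d$ and then normalize.
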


\begin{proof}
Let $\sigma_d$ be the area measure on the unit sphere $\mathbb{S}^{d-1}:=\{x\in\R^d\;|\;|x|=1\}$. For every pair $(j,k)$, with $j,k\in\{1,\ldots,N\}$,  $j\neq k$, let us consider the set 
$$P_{jk}\;:=\;\{x\in\R^d\;|\;\,x\cdot(y_j-y_k)=0\}\,.$$
Since $y_j\neq y_k$, $P_{jk}$ is an hyperplane in $\R^d$, whence $\sigma_d(\mathbb{S}^{d-1}\cap P_{jk})=0$. It follows that the set
$$Q\;:=\;\mathbb{S}^{d-1}\;\setminus\bigcup_{\substack{ j,k\in\{1,\ldots ,N\} \\ j\neq k}}(\,\mathbb{S}^{d-1}\cap P_{jk})$$
satisfies $\sigma_{d}(Q)=\sigma_{d}(\mathbb{S}^{d-1})$, and in particular $Q$ is non-empty. If we choose $a\in Q$, then by construction the numbers $a\cdot y_j$ are all distinct.
\end{proof}

We are ready to prove the main Theorem.

\begin{proof}[Proof of Theorem \ref{th:main}]
Let $z>0$: owing to the scaling property
\[
 \Gamma_{\alpha,Y}(\lambda z)\;=\;\lambda\,\Gamma_{\lambda^{-1}\alpha,\lambda Y}(z)
\]
valid for every $\lambda>0$, it is enough to prove that $\Gamma_{\alpha,Y}:=\Gamma_{\alpha,Y}(1)$ is non-singular for any choice of $\alpha$ and $Y$.

Using \eqref{eq:def_of_the_Gs}-\eqref{ga-def}, we re-write
\[
 \Gamma_{\alpha,Y}\;=\;A_{\alpha,Y}-\ii B_Y
\]
with $A_{\alpha,Y},B_Y\in Sym_N(\R)$ given explicitly by
\begin{equation*}
\begin{split}
(A_{\alpha,Y})_{jk}\;&:=\;\alpha_j\delta_{j,k}-\re\,\mathcal{G}_1^{y_jy_k}\\
(B_Y)_{jk}\;&:=\;\frac{1}{4\pi}\,\sinc(|y_j-y_k|)\,,
\end{split}
\end{equation*}
where the real function $\sinc(x)$ is defined by
$$\sinc(x)\;:=\;\begin{cases}
\displaystyle\frac{\sin(x)}{x} & \textrm{if }x\neq 0\\
\;\;1 & \textrm{if } x=0\,.
\end{cases}$$ 
Owing to Lemma \ref{linear}, the thesis follows when one proves that $B_Y$ is positive definite.

Based on an immediate integration in polar coordinates, it is convenient to express
$$\sinc(|x|)\;=\;\frac{1}{4\pi}\int_{\mathbb{S}^2}e^{ixp}\,\ud\sigma_2(p),\qquad x\in\R^3\,,$$
where $\sigma_2(p)$ is the area measure on the unit sphere $\mathbb{S}^2\subset\R^3$.

For generic $v\equiv(v_1,\dots,v_N)$
let $B_Y[v]:=v\cdot B_Y v$ be the quadratic form associated to $B_Y$.
Since
\begin{equation*}
\begin{split}
B_Y[v]\;&=\;\sum_{j,k=1}^N\frac{1}{4\pi}v_jv_k\, \sinc(|y_j-y_k|)\\
&=\;\frac{1}{\,16\pi^2}\sum_{j,k=1}^N v_jv_k\int_{\mathbb{S}^2}e^{i(y_j-y_k)\cdot p}\,\ud\sigma_2(p)\\
&=\;\frac{1}{\,16\pi^2}\int_{\mathbb{S}^2}\bigg|\sum_{j=1}^N v_j \,e^{iy_j\cdot p}\bigg|^2 \ud\sigma_2(p)\;\geqslant\; 0\,,
\end{split}
\end{equation*}
then the matrix $B_Y$ is positive semi-definite.

To demonstrate that $B_Y$  is actually positive definite, we specialise to the present context the clever argument by Castel, Filbir, and Szwarc \cite{Castel-Filbir-Szwarc-2004-jat2005} (in  \cite{Castel-Filbir-Szwarc-2004-jat2005} the general question of linear independence of exponential maps over subsets of $\R^d$ is addressed).

Assume that for some $v\in\mathbb{R}^3\setminus\{0\}$ one has $B_Y v=0$. From the above computation of $B_Y[v]$, one deduces
\begin{equation*}\tag{*}\label{secret}
\sum_{j=1}^N v_j\, e^{iy_j\cdot p}\;=\;0 \qquad \forall p\in \mathbb{S}^2.
\end{equation*}

We show, by induction on $N$, that the latter identity implies $v=0$. The case $N=1$ is obvious. Let $N\geqslant 2$, and in \eqref{secret} let us consider all possible $p\in\mathbb{S}^2$ of the form
\[
 p\;=\; a\sin t+b\cos t\,,\qquad t\in[0,2\pi)
\]
for two fixed vectors $a,b\in\mathbb{R}^3$ such that $|a|=|b|=1$, $a\perp b$, and the scalars $\alpha_j:= y_j\cdot a$ with $j\in\{1,\dots,N\}$ are all distinct. Lemma \ref{lem_ab_exist} ensures that this choice of $a$ and $b$ is possible. It is non-restrictive to assume $\alpha_1<\cdots<\alpha_N$, and let us also set $\beta_j:= y_j\cdot b$, $j\in\{1,\dots,N\}$. Then \eqref{secret} reads
\[
 \sum_{j=1}^N v_j\, e^{i(\alpha_j\sin t+ \beta_j \cos t)}\;=\;0\qquad \forall t\in[0,2\pi)\,.
\]
In fact, since l.h.s.~above depends analytically on $t$, such an identity holds true for every $t\in\C$. Specialising it for $t=-\ii\tau$, $\tau\in\mathbb{R}$, it takes the form
\[
 \sum_{j=1}^N v_j\, e^{\alpha_j\sinh\tau+ \ii\beta_j\cosh\tau}\;=\;0\qquad \forall \tau\in\mathbb{R}\,,
\]
and also, upon dividing by $\exp(\alpha_N\sinh\tau+ \ii\beta_N\cosh\tau)\neq 0$,
\[
 \sum_{j=1}^N v_j\, e^{(\alpha_j-\alpha_N)\sinh\tau+ \ii(\beta_j-\beta_N)\cosh\tau}\;=\;0\qquad \forall \tau\in\mathbb{R}\,.
\]
Since $\alpha_j<\alpha_N$ for $j<N$, taking in the latter expression $\tau$ arbitrarily large and positive  implies necessarily $v_N=0$. By the inductive assumption that \eqref{secret} implies $v=0$ when it is considered with $N-1$ instead of $N$, one concludes that also $v_1=\cdots=v_{N-1}=0$.
\end{proof}

In the remaining part of this Section we comment on how the absence of positive resonances for $-\Delta_{\alpha,Y}$ could be also read out from the already mentioned recent works \cite{Goloshchapova-Zastavnyi-Malamud-2010,Goloshchapova-Zastavnyi-Malamud-2011,Galtbayar-Yajima-2019}.

In \cite{Galtbayar-Yajima-2019} the reasoning is based upon the computation of the residue of $\Gamma_{\alpha,Y}(z)^{-1}$ at a generic pole $z\in\mathcal{E}_{\alpha,Y}^0$. From the resolvent identity \eqref{eq:resolvent_identity} \emph{and} the information that $(-\Delta_{\alpha,Y}-z^2\mathbbm{1})^{-1}$ is bounded for $z\in\mathbb{R}\setminus\{0\}$ it is shown that one obtains instead a zero value, thus contradicting the fact that $z$ is a pole. This approach requires the additional knowledge (already available, as seen in Theorem \ref{thm:general_properties}(iv)) that $-\Delta_{\alpha,Y}$ has no positive eigenvalues, and by-passes the explicit structure of the matrix $\Gamma_{\alpha,Y}(z)$.

In \cite{Goloshchapova-Zastavnyi-Malamud-2010,Goloshchapova-Zastavnyi-Malamud-2011} strictly speaking no reference to (positive) resonances of $-\Delta_{\alpha,Y}$ is made. The operator $-\Delta_{\alpha,Y}$ and its main properties are recovered by means of the alternative framework of boundary triplets and Weyl function. Then the positive definiteness of what we denoted here by $B_Y$ is indirectly alluded to by considering another matrix, with similar structure, and proving for the latter the positive definiteness by means of general properties of positive definite functions like our $\mathrm{sinc}(x)$.

\section{Connection with the dispersive properties of $e^{\ii t\Delta_{\alpha,Y}}$ and open problems}\label{sec:remarks_open_problems}

In this short, concluding Section we return to the general subject of the dispersive properties of the singular point-perturbed Schr\"{o}dinger equation \eqref{eq:SEq-sing}, in order to emphasize the connection of our Theorem \ref{th:main} with recent dispersive and scattering results for the propagator $e^{\ii t\Delta_{\alpha,Y}}$.

We already mentioned in Section \ref{sec:intro-background} that $L^1\to L^\infty$ (and hence by interpolation general $L^p\to L^q$) dispersive estimates for $e^{\ii t\Delta_{\alpha,Y}}$ were first proved in \cite{DAncona-Pierfelice-Teta-2006}, with a suitable weight that accounts for the singularity $|x-y_j|^{-1}$ of $e^{\ii t\Delta_{\alpha,Y}}f$, whereas in the subsequent work \cite{Iandoli-Scandone-2017} reproduced such estimates without weight in the regime $q\in[2,3)$. In \cite{DAncona-Pierfelice-Teta-2006} the authors assumed that $\Gamma_{\alpha,Y}(z)$ be non-singular for every $z\in\mathbb{R}$. In view of Theorem \ref{th:main}, it is sufficient to impose that zero is regular (it is neither an eigenvalue, nor a resonance).

On a related note, we mentioned that in \cite{DMSY-2017} the $L^p$-boundedness for the wave operator for the pair $(-\Delta_{\alpha,Y},-\Delta)$ was proved for all possible $p$'s, namely $p\in(1,3)$, under the implicit assumption of the absence of a zero eigenvalue as well as of positive resonances. The latter condition is now rigorously confirmed by Theorem \ref{th:main}.

This rises up two interesting open questions. First, one would like to investigate whether, in the spirit of \cite{DAncona-Pierfelice-Teta-2006,Iandoli-Scandone-2017}, an obstruction at zero in the form of a zero-energy eigenvalue or resonance would still allow one to derive certain (possibly slower) dispersive estimates. Let us recall that the counterpart problem for ordinary Schr\"odinger operators with spectral obstruction at zero has been intensively studied, significantly by Jensen and Kato \cite{Jensen-Kato-1979}, Rauch \cite{Rauch-1978}, Rodnianski and Schlag \cite{Rodnianski-Schlag-Invent2004}, Erdo\u{g}an, Burak, and Schlag \cite{Erdogan-Burak-Schlag-2004}, and Yajima \cite{Yajima-CMP2015_dispersive_est}.

Analogously, it would be of interest to understand whether some $L^p$-boundedness of the wave operator for $(-\Delta_{\alpha,Y},-\Delta)$ is still valid in the presence of a zero-energy eigenvalue, thus extending the recent results by Goldberg and Green \cite{Goldberg-Green-2016-thresholdsing} and by Yajima \cite{Yajima-DocMath2016,Yajma2016_3D_WaveOps_ThreshSing} on the $L^p$-boundedness of wave operators for the ordinary Schr\"odinger operators with threshold singularities.

These are questions that surely deserve to be investigated.


\def\cprime{$'$}

\end{document}